\newtheorem{theorem}{Theorem}[section]
\newtheorem{proposition}[theorem]{Proposition}
\newtheorem{lemma}[theorem]{Lemma}
\newtheorem{definition}[theorem]{Definition}
\newtheorem{corollary}[theorem]{Corollary}
\newtheorem{example}[theorem]{Example}
\numberwithin{equation}{section}
\newcommand{\R}{{\mathbb R}}
\newcommand{\CA}{{\mathcal{A}}}
\newcommand{\CC}{{\mathcal{C}}}
\newcommand{\CL}{{\mathcal{L}}}
\newcommand{\CO}{{\mathcal{O}}}
\newcommand{\CS}{{\mathcal{S}}}
\renewcommand{\ll}{{\langle}}
\newcommand{\rr}{{\rangle}}
\newcommand{\p}{\mathfrak{p}}
\renewcommand{\t}{\mathfrak{t}}
\newcommand{\Z}{{\mathbb Z}}
\begin{document}
\title[Asymptotics]{Asymptotic distributions associated to piecewise quasi-polynomials}

\author{Paul-Emile Paradan}
\address{Institut Montpelli\'erain Alexander Grothendieck, CNRS ,
Universit\'e de Montpellier} \email{paul-emile.paradan@umontpellier.fr }

\author{Mich{\`e}le Vergne}
\address{ Universit\'e Denis Diderot, Institut Math\'ematique de
Jussieu, Sophie Germain}
\email{michele.vergne@imj-prg.fr}

\maketitle
%

%NOTATIONS
%$V$ vector space, $E=V\oplus \R$, $P$ polyhedron
%$\Lambda$ lattice.
%
%$\Theta$ deistribution
%
%$A$ asymptotic
%
%$p$ polynomial
%
%$k$ semi-classical parameter
%
%$f$ test function, $R$ rational affine space,
%$N$, $n$ integers
%
%

\section{Introduction}

Let $V$ be a finite dimensional real vector space equipped with a lattice
$\Lambda$.
Let $P\subset V$ be a rational polyhedron.
The Euler-Maclaurin formula (\cite{guisteriemann}, \cite{ber-ver-elm}) gives an asymptotic estimate, when $k$ goes to $\infty$,
for the Riemann sum $\sum_{\lambda\in k P\cap \Lambda } \varphi(\lambda/k)$
of the values of a test function $\varphi$ at the sample points
$\frac{1}{k}\Lambda\cap P$ of $P$, with leading term $k^{\dim P}\int_P \varphi$.
Here we consider the slightly more general case of a weighted  sum.
Let $q(\lambda,k)$ be a quasi-polynomial function on $\Lambda\oplus \Z$.
We consider, for $k\geq 1$,  the distribution
$$\ll\Theta(P;q)(k),\varphi\rr=\sum_{\lambda\in kP\cap \Lambda} q(\lambda,k) \varphi(\lambda/k)$$
 and we show (Proposition \ref{pro:tech2}) that the function $k\mapsto \ll \Theta(P;q)(k),\varphi\rr$ admits an asymptotic expansion  when $k$ tends to
$\infty$ in powers of $1/k$ with coefficients periodic functions of $k$.

We extend this result to an algebra  $\CS(\Lambda)$
of piecewise quasi-polynomial functions
 on $\Lambda\oplus \Z\subset V\oplus \R$.
  A function $m(\lambda,k)$ ($\lambda\in \Lambda, k\in \Z$) in $\CS(\Lambda)$
  is supported in  an union of polyhedral cones in $V\oplus \R$.
The main feature of a function $m(\lambda,k)$ in $\CS(\Lambda)$ is that $m(\lambda,k)$ is
entirely determined by its large behavior in $k$.
We associate to $m(\lambda,k)$ a formal series $A(m)$ of distributions on $V$
encoding the asymptotic behavior of $m(\lambda,k)$ when $k$ tends to $\infty$.

The motivating  example
 is the case where $M$ is a projective  manifold, and
 $\CL$ the corresponding ample bundle.
If $T$ is a torus acting on $M$, then   write, for $t\in T$,
$$\sum_{i=0}^{\dim M} (-1)^i {\rm Tr}(t,H^i(M,\CO(\CL^k)))=\sum_{\lambda}m(\lambda,k)t^\lambda$$
where $\lambda$ runs over the lattice $\Lambda$  of characters of $T$.
The corresponding asymptotic expansion of the distribution
$\sum_{\lambda}m(\lambda,k)\delta_{\lambda/k}$
is an important object associated to $M$
involving the Duistermaat-Heckmann measure and the Todd class of $M$,
 see \cite{ver:graded} for its determination.
The  determination of similar asymptotics in the more general case of twisted Dirac operators is the object of
a forthcoming article \cite{pep-vergneasy}.

Thus let $m\in \CS(\Lambda)$, and consider the sequence
  $$\Theta(m)(k)=\sum_{\lambda\in \Lambda}m(\lambda,k)\delta_{\lambda/k}$$
 of distributions on $V$ and  its asymptotic expansion $A(m)$ when $k$ tends to $\infty$.
Let $T$ be the torus with lattice of characters $\Lambda$.
If $g\in T$ is an element of finite order, then $m^g(\lambda,k):=g^{\lambda}m(\lambda,k)$ is again in $\CS(\Lambda)$.
 Our main result (Theorem \ref{theo:unique}) is that the piecewise quasi-polynomial function $m$ is entirely determined by the collections of asymptotic expansions $A(m^g)$, when $g$ varies over the set of elements of $T$ of finite order.

We also  prove (Proposition \ref{pro:wanted})
 a functorial property of $A(m)$ under pushforward.

 We use these results to give new proofs of functoriality of the formal quantization of a symplectic manifold \cite{parformal1} or, more generally, of a spinc manifold \cite{parformal2}.

 For these applications, we also consider the case where $V$ is a Cartan subalgebra
 of a compact Lie group, and anti-invariant distributions on $V$ of a similar nature.

\subsection{Piecewise polynomial functions}\label{subsecPW}

Let $V$  be a real vector space equipped with a lattice $\Lambda$.
Usually, an element of $V$ is denoted by $\xi$, and an element of $\Lambda$ by $\lambda$.
In this article, a cone $C$ will always be a closed convex polyhedral cone, and $0\in C$.

Let $\Lambda^*$ be the dual lattice, and let $g\in T:=V^*/\Lambda^*$.
If $G\in V^*$ is a representative of $g$ and $\lambda\in \Lambda$,
then we denote $g^{\lambda}=e^{2i\pi \ll G,\lambda\rr}$.

A periodic function $m$ on $\Lambda$ is a function such that there exists a positive integer $D$ (we do not fix $D$) such that $m(\lambda_0+D \lambda)=m(\lambda_0)$ for $\lambda,\lambda_0\in \Lambda$.
The space of such functions is linearly generated by the functions
$\lambda\mapsto g^{\lambda}$ for $g\in T$ of finite order.
By definition, the algebra of quasi-polynomial functions on
$\Lambda$ is generated by polynomials and  periodic functions on $\Lambda$.
If $V_0$ is a rational subspace of $V$, the restriction of $m$ to $\Lambda_0:=\Lambda\cap V_0$ is a quasi-polynomial function on $\Lambda_0$.
The space of quasi-polynomial functions is graded:
 a quasi-polynomial  function   homogeneous of  degree $d$
 is a linear combination of functions
$t^{\lambda}  h(\lambda)$
where $t\in T$ is of finite order, and $h$ an homogeneous polynomial on $V$ of degree $d$.
Let $q(\lambda)$ be a quasi-polynomial function on $\Lambda$.
There is a sublattice $\Gamma$ of $\Lambda$ of finite index $d_\Gamma$ such that
for any given $\gamma\in \Lambda$, we have
$q(\lambda)=p_\gamma(\lambda)$ for any $\lambda\in \gamma+\Gamma$
where $p_\gamma(\xi)$  is a (uniquely determined) polynomial function on
$V$.
Then define $q_{pol}(\xi)=\frac{1}{d_\Gamma}\sum_{\gamma\in \Lambda/\Gamma}p_\gamma(\xi)$,
a polynomial function on $V$.
This polynomial function is independent of the choice of the sublattice $\Gamma$.
Then $q(\lambda)-q_{pol}(\lambda)$ is
 a linear combination of functions of the form
$t^{\lambda}h(\lambda)$ with  $h(\lambda)$ polynomial and $t\neq 1$.

Using the Lebesgue measure associated to $\Lambda$, we identify generalized functions on $V$ and distributions  on $V$.
If $\theta$ is a generalized
function on $V$, we may write  $\int_{V}\theta(\xi)\varphi(\xi)d\xi$
for its value on the test function $\varphi$.
If $R$ is a rational affine subspace of $V$, $R$ inherits a canonical translation invariant measure. If $P$ is a rational polyhedron in $V$, it generates a rational affine subspace of $V$, and $\int_P \varphi$ is well defined for $\varphi$ a smooth function with compact support.

We say that a distribution $\theta(k)$ depending of an integer $k$ is periodic in $k$ if there exists a positive integer $D$ such that for any test function $\varphi$ on $V$, and $k_0,k\in \Z$,
$\ll\theta(k_0+D k),\varphi \rr=\ll\theta(k_0),\varphi\rr$.
Then there exists (unique) distributions $\theta_{\zeta}$
indexed by $D$-th roots of unity such that  $\ll\theta(k),\varphi\rr=
\sum_{\zeta,\zeta^D=1}\zeta^k \ll\theta_{\zeta},\varphi\rr.$

Let $(\Theta(k))_{k\geq 1}$  be a sequence of
 distributions.
 We  say that $\Theta(k)$ admits an asymptotic expansion (with periodic coefficients)
 if there exists  $n_0\in \Z$ and a sequence of  distributions $\theta_n(k),{n\geq 0}$, depending periodically of $k$, such that for any test function $\varphi$ and any non negative  integer $N$,  we have
$$
\ll \Theta(k),\varphi\rr=
k^{n_0}\sum_{n=0}^N \frac{1}{k^n}\ll\theta_n(k), \varphi\rr + o(k^{n_0-N}).
$$
 We write
$$
\Theta(k)\equiv k^{n_0}\sum_{n=0}^{\infty} \frac{1}{k^n}\theta_n(k).
$$

The distributions $\theta_n(k)$ are uniquely determined.

Given a sequence $\theta_n(k)$ of periodic distributions, and $n_0\in \Z$,
we write formally $M(\xi,k)$  for the series  of distributions on $V$ defined by
$$\ll M(\xi,k),\varphi\rr=k^{n_0}
\sum_{n=0}^{\infty} \frac{1}{k^{n}}\int_V\theta_{n}(k)(\xi)\varphi(\xi)d\xi.$$
We can multiply $M(\xi,k)$ by quasi-polynomial functions $q(k)$ of $k$
and smooth functions $h(\xi)$ of $\xi$
and obtain the formal series $q(k)h(\xi)M(\xi,k)$ of the same form with $n_0$ changed to $n_0+{\rm degree}(q)$.

Let $E=V\oplus \R$, and we consider the lattice
 $\tilde \Lambda=\Lambda\oplus \Z$ in $E$.
 An element of $\tilde \Lambda$ is written as
 $(\lambda,k)$ with $\lambda\in \Lambda$ and $k\in \Z$.
We consider quasi-polynomial functions
$q(\lambda,k)$ on $\tilde \Lambda$.
As before, this space is graded. We call the degree of a quasi-polynomial function
on $\Lambda\oplus \Z$ the total degree.
A quasi-polynomial function $q(\lambda,k)$ is of total degree $d$ if it is a linear combination of functions
$(\lambda,k)\mapsto j(k)  t^{\lambda} k^a h(\lambda)$
where $j(k)$ is a periodic function of $k$, $t\in T$  of finite order, $a$ a non negative integer, and $h$ an homogeneous polynomial on $V$ of degree $b$, with $b$ such that  $a+b=d$.

Let $q(\lambda,k)$ be a quasi-polynomial function on $ \Lambda\oplus \Z$.
We construct $q_{pol}(\xi,k)$ on $V\times \Z$, and depending polynomially on $\xi$ as before. We choose a sublattice of finite index $d_\Gamma$ in $\Lambda$ and
functions $p_\gamma(\xi,k)$  depending polynomially on $\xi\in V$ and quasi-polynomial in $k$
 such that $q(\lambda,k)=p_\gamma(\lambda,k)$
 if $\lambda\in \gamma+\Gamma$.
 Then $q_{pol}(\xi,k)=\frac{1}{d_\Gamma}\sum_{\gamma\in \Lambda/\Gamma}p_\gamma(\xi,k)$.
We say that $q_{pol}(\xi,k)$ is the polynomial part (relative to $\Lambda$) of $q$.
If $q$ is homogeneous of total degree $d$, then the function
$(k,\xi)\mapsto q_{pol}(k\xi,k)$ is a linear combination of functions of the form
$j(k) k^d s(\xi)$ where $j(k)$ is a periodic function of $k$ and $s(\xi)$ a polynomial function of $\xi$.

\begin{proposition}\label{pro:tech}
Let $P$ be a rational polyhedron in $V$ with non empty interior.
Let $q(\lambda,k)$ be a quasi-polynomial function on $\Lambda\oplus \Z$ homogeneous of total degree $d$. Let $q_{pol}(\xi,k)$ be its polynomial part.
Let $k\geq 1$.
The distribution
$$\ll \Theta(P;q)(k),\varphi\rr=\sum_{\lambda\in kP}  q(\lambda,k) \varphi(\lambda/k)$$
admits an asymptotic expansion when $k\to \infty$  of the form
$$k^{\dim V} k^d\sum_{n=0}^\infty \frac{1}{k^{n}}\ll\theta_{n}(k),\varphi\rr.$$
Furthermore,  the term $k^{d}\ll\theta_{0}(k),\varphi\rr$ is given by
$$k^d\ll\theta_{0}(k),\varphi\rr=\int_P q_{pol}(k\xi,k) \varphi(\xi) d\xi$$
where $q_{pol}$ is the polynomial  part (with respect to $\Lambda$)   of $q$.
\end{proposition}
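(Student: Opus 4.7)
The plan is to decompose $q$ into monomials, trivialize the twist $t^{\lambda}$ on a sublattice of finite index, and then apply the Euler--Maclaurin expansion for rational polyhedra coset by coset.

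By linearity, it suffices to treat a single monomial
$q(\lambda,k)=j(k)\,t^{\lambda}\,k^{a}\,h(\lambda)$
with $j$ periodic in $k$, $t\in T$ of finite order, and $h$ a homogeneous polynomial on $V$ of degree $b$ satisfying $a+b=d$. The scalar factor $j(k)k^{a}$ pulls out of the sum, and the homogeneity $h(\lambda)=k^{b}h(\lambda/k)$ turns $\Theta(P;q)(k)$ into a genuine weighted Riemann sum:
$$ \ll\Theta(P;q)(k),\varphi\rr \;=\; j(k)\,k^{d}\!\!\sum_{\lambda\in kP\cap\Lambda}\!t^{\lambda}\,(h\varphi)(\lambda/k). $$

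Now pick a positive integer $D$ such that $t^{D\lambda}=1$ for every $\lambda\in\Lambda$ and split $\Lambda$ into cosets $\gamma+D\Lambda$, $\gamma\in\Lambda/D\Lambda$. On each coset $t^{\lambda}$ is the constant $t^{\gamma}$, so
$$ \sum_{\lambda\in kP\cap\Lambda}\!t^{\lambda}(h\varphi)(\lambda/k) \;=\; \sum_{\gamma\in\Lambda/D\Lambda}\!t^{\gamma}\!\!\sum_{\mu\in kP\cap(\gamma+D\Lambda)}\!(h\varphi)(\mu/k). $$
Each inner sum is a Riemann sum for the compactly supported smooth function $h\varphi$ relative to the shifted rational sublattice $\gamma+D\Lambda$ and the rational polyhedron $P$. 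The Euler--Maclaurin expansion of \cite{guisteriemann}, \cite{ber-ver-elm} furnishes a full asymptotic expansion in powers of $1/k$ whose coefficients are distributions on $V$ periodic in $k$ (the period absorbing both the denominators of the vertices of $P$ and the integer $D$), with leading term $D^{-\dim V}\,k^{\dim V}\int_{P}h(\xi)\varphi(\xi)\,d\xi$. Weighting by $t^{\gamma}$ and summing over cosets yields the announced expansion $k^{\dim V}k^{d}\sum_{n}k^{-n}\ll\theta_{n}(k),\varphi\rr$ with periodic coefficients in $k$, whose leading term is
$$ j(k)\,k^{d+\dim V}\,\frac{1}{D^{\dim V}}\Bigl(\sum_{\gamma\in\Lambda/D\Lambda}\!t^{\gamma}\Bigr)\!\int_{P}h(\xi)\varphi(\xi)\,d\xi. $$

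The character sum equals $D^{\dim V}$ when $t=1$ and vanishes otherwise, so only the $t=1$ monomials contribute at leading order. This matches precisely the polynomial part $q_{pol}$: indeed $q_{pol}(k\xi,k)=j(k)k^{d}h(\xi)$ when $t=1$ and $0$ when $t\ne 1$, so reassembling the monomial pieces of $q$ gives $k^{d}\ll\theta_{0}(k),\varphi\rr=\int_{P}q_{pol}(k\xi,k)\varphi(\xi)\,d\xi$. The only delicate step is the invocation of Euler--Maclaurin on a shifted rational sublattice with distributional coefficients depending periodically on $k$; once that ingredient is imported from the cited references, the rest is algebraic bookkeeping in the quasi-polynomial decomposition.
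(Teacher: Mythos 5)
Your proposal is correct, and it takes a genuinely different route from the paper. Where the paper keeps the twist $g^{\lambda}$ intact, reduces via Brianchon--Gram to one-dimensional cones, and handles $\zeta\neq 1$ directly with a twisted Euler--Maclaurin expansion built from the polynomials $B_{n,\zeta}$ defined by $e^{xz}/(\zeta e^{z}-1)$ (so the $\zeta\neq 1$ contributions are visibly boundary terms), you instead eliminate the twist at the outset by splitting $\Lambda$ into cosets modulo $D\Lambda$, on each of which $t^{\lambda}$ is constant, and then appeal to the untwisted Euler--Maclaurin on the shifted sublattice. The character-sum orthogonality then reproduces the polynomial part $q_{pol}$ in the leading coefficient, exactly as claimed.

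The one step you should not treat as free is the parenthetical ``once that ingredient is imported from the cited references.'' The sum $\sum_{\mu\in kP\cap(\gamma+D\Lambda)}(h\varphi)(\mu/k)$ has sample points lying on the lattice $\gamma/k+(D/k)\Lambda$, i.e.\ a lattice of mesh $D/k$ shifted by a $k$-dependent amount $\gamma/k$; this is not literally the setup $\sum_{\lambda\in kP\cap\Lambda}\varphi(\lambda/k)$ treated in \cite{guisteriemann} and \cite{ber-ver-elm}. Converting it to that form (e.g.\ substituting $\mu=\gamma+D\nu$) introduces a $k$-dependent translation of the polytope, and the periodicity of the resulting coefficients in $k$ comes precisely from tracking the fractional parts of these $k$-dependent shifts at the vertices --- which is the same bookkeeping the paper carries out explicitly in one dimension via $\{-ks\}$ and the Bernoulli polynomials $B_n(\{-ks\})$. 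So your argument is sound, but the invocation of the references on each coset hides roughly the same amount of work that the paper does in its one-dimensional analysis; it is not a purely formal import. With that caveat spelled out, the coset-decomposition route is a clean and valid alternative, and it has the pedagogical advantage of making the vanishing of the $t\neq 1$ leading terms a one-line orthogonality computation rather than a consequence of the $B_{n,\zeta}$ expansion being a pure boundary term.
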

\begin{proof}
Let $q(\lambda,k)=j(k) k^a g^{\lambda}h(\lambda)$ be a quasi-polynomial function of total degree $d$.
Let
\begin{equation}\label{eq:Theta0}
\ll \Theta_0^g(P)(k),\varphi\rr=\sum_{\lambda\in kP\cap \Lambda} g^{\lambda}  \varphi(\lambda/k).
\end{equation}
If $\Theta_0^g(P)(k)$ admits the asymptotic expansion
$M(\xi,k)$,
 then $\Theta(P;q)(k)$ admits
 the asymptotic expansion
 $ j(k)k^a h(k\xi) M(\xi,k)$.
So it is sufficient to consider the case where $q(\lambda,k)=g^{\lambda}$
and the distribution $\Theta_0^g(P)(k)$.

We now proceed as in \cite{ber-ver-elm} for the case $g=1$ and sketch the proof.
By decomposing the characteristic function  $[P]$ of the polyhedron $P$ in a signed sum of characteristic functions of tangent cones, via the Brianchon Gram formula,
then decomposing furthermore each tangent cone in  a signed sum of cones $C_a$  of the form $\Sigma_a\times R_a$ with $\Sigma_a$ is a translate of a unimodular cone and $R_a$ a rational space,
we  are reduced to
study this distribution for the product of the dimension $1$  following situations.

$V=\R, \Lambda=\Z$ and one of the following two cases:

\begin{itemize}
\item
$P=\R$

\item
$P=s+\R_{\geq 0}$
with $s$ a rational number.

\end{itemize}

For example, if $P=[a,b]$ is an interval in $\R$ with rational end points $a,b$, we write
$[P]=[a,\infty]+[-\infty,b]-[\R]$.

For $P=\R$, and $\zeta$ a root of unity, it is easy to see that
$$\ll \Theta^\zeta(k),\varphi\rr=
\sum_{\mu\in \Z}  \zeta^\mu \varphi(\mu/k)$$
is equivalent to
$k\int_\R \varphi(\xi)d\xi$ if $\zeta=1$
or is equivalent to $0$ if $\zeta\neq 1$.

We now study the case where $P=s+\R_{\geq 0}$.
Let $$\ll \Theta^\zeta(k),\varphi\rr=
\sum_{\mu\in \Z, \mu-ks \geq 0}  \zeta^\mu \varphi(\mu/k)$$ and let us compute its asymptotic expansion.

For $r\in \R$,
the fractional part $\{r\}$ is defined by $\{r\}\in [0,1[, r-\{r\}\in \Z$.
If $\mu$ is an integer greater or equal to  $ks$, then
$\mu=ks+\{-ks\}+u$ with $u$ a non negative integer.

We consider first the case where $\zeta=1$.
This case has been treated for example in \cite{cohen} (Theorem 9.2.2), and there is an
Euler-Maclaurin formula with remainder which leads to the following asymptotic expansion.

The function  $z\mapsto \frac{e^{xz}}{e^z-1}$ has a simple pole at $z=0$. Its Laurent series at $z=0$ is

$$
\frac{e^{xz}}{e^z-1} =\sum_{n=-1}^\infty B_{n+1}(x)\frac{z^{n}}{(n+1)!}
 $$
where $B_n(x)$ $(n\geq 0)$ are the Bernoulli polynomials.

 If $s$ is rational, and $n\geq 0$,  the function $k\mapsto B_n(\{-ks\})$ is a periodic function of $k$  with period the denominator of $s$,
and
$$
 \sum_{\mu\in \Z, \mu\geq ks}\varphi(\frac{\mu}{k})\\\equiv
k( \int_s^\infty  \varphi(\xi) d\xi -
 \sum_{n=1}^{\infty} \frac{1}{k^n} \frac{B_n(\{-k s\})}{n!}\varphi^{(n-1)}(s)).$$
 This formula is easily proven by Fourier transform.
 Indeed,
 for $f(\xi)=e^{i\xi z}$,
 the series $\sum_{\mu\geq ks} f(\mu/k)$
 is
 $\sum_{u\geq 0}e^{is z} e^{i \{-k s\}z/k} e^{iuz/k}.$
 It is convergent if $z$ is in the upper half plane, and the sum is
 $$F(z)(k)=-e^{isz}\frac{e^{i\{-k s\} z/k}}{e^{iz/k}-1}.$$
 So the Fourier transform of the tempered distribution $\Theta^{\zeta=1}(k)$ is the boundary value
 of the holomorphic function $z\mapsto F(z)(k)$ above. We can compute the asymptotic behavior
 of $F(z)(k)$ easily when $k$ tends to $\infty$, since $\{-ks\}\leq 1$,
 and $z/k$ becomes small.

Rewriting $[P]$ as the signed sum of the characteristic functions of the cones $C_a$, we see  that the distribution $\Theta_0^g(P)(k)$ for $g=1$
is equivalent to $$k^{\dim V}(\sum_{n=0}^{\infty} \frac{1}{k^{n}} \theta_n(k))$$
with $\theta_0$ independent of $k$, and given by  $\ll\theta_0,\varphi\rr=\int_P \varphi(\xi)d\xi$.

Now consider the case where $\zeta\neq 1$. Then
$$\sum_{\mu\in \Z, \mu\geq ks}\zeta^{\mu}\varphi(\mu/k)=
 \sum_{u\geq 0}\zeta^{ks+\{-ks\}}\zeta^u \varphi(s+\{-ks\}/k+u/k).$$

The function $k\mapsto \zeta^{ks+\{-ks\}}$ is a periodic function of $k$ with period $ed$ if $\zeta^e=1$  and $ds$ is an integer.
%So it remains to study
%$$\sum_{u\geq 0}\zeta^u \varphi(s+\{-ks\}/k+u/k).$$
If $\zeta\neq 1$, the function $z\mapsto \frac{e^{xz}}{\zeta e^z-1}$ is holomorphic at $z=0$. Define the polynomials $B_{n,\zeta}(x)$ via the Taylor series expansion:
$$
\frac{e^{xz}}{\zeta e^z-1} =\sum_{n=0}^\infty B_{n+1,\zeta}(x)\frac{z^{n}}{(n+1)!}.
 $$
It is easily seen by Fourier transform that
$\sum_{\mu\in \Z, \mu\geq ks}\zeta^{\mu}\varphi(\mu/k)$
is equivalent to
$$
-k\zeta^{ks+\{-ks\}}
 \sum_{n=1}^{\infty} \frac{1}{k^n} \frac{B_{n,\zeta}(\{-k s\})}{n!}\varphi^{(n-1)}(s).$$
In particular, $\Theta^\zeta(k)$ admits an asymptotic expansion  in non negative powers of $1/k$ and each coefficient of this asymptotic expansion is a periodic distribution supported at $s$.

Rewriting $[P]$ in terms of the signed cones $C_a$, we see that indeed if $g\in T$ is not $1$, one of the corresponding $\zeta$ in the reduction to a product of one dimensional  cones    is not $1$, and so
$$\Theta_0^g(P)(k)\equiv k^{\dim V-1}(\sum_{n=0}^{\infty} \frac{1}{k^{n}}\theta_n(k)).$$
So we obtain our proposition.

\end{proof}

Consider now $P$ a rational polyhedron, with possibly empty interior.
Let $C_P$ be the cone of base $P$ in $E=V\oplus \R$,
 $$C_P:=\{(t\xi,t), t\geq 0, \xi\in P\}.$$

Let $q(\lambda,k)$ be a quasi-polynomial function on $\Lambda\oplus \Z$.
We consider again
$$\ll \Theta(P;q)(k),\varphi\rr=\sum_{\lambda\in kP\cap \Lambda} q(\lambda,k) \varphi(\lambda/k).$$

Consider the vector space $E_P$ generated by the cone $C_P$ in $E$.
It is clear that $\Theta(P;q)$ depends only of the restriction $r$ of $q$ to
$E_P\cap (\Lambda\oplus \Z)$.
This is a quasi-polynomial function on $E_P$ with respect to the lattice
 $E_P\cap (\Lambda\oplus \Z)$. We assume that
 the quasi-polynomial function $r$ is homogeneous of degree $d_0$.
 This degree might be smaller that the total degree of $q$.
Consider the affine space $R_P$ generated by $P$ in $V$.
%Let $\beta\in R_P$, and define ${\rm lin}(P)=R_P-\beta$ (this linear space do not depend of the choice of $\beta$), the vector space parallel to $P$,
% with lattice $\Lambda_0=\Lambda\cap {\rm lin}(P)$.
Let $E_P^{\Z}=E_P\cap (V\oplus \Z)$. If $\xi\in R_P, k\in \Z$, then
$(k\xi,k)\in E_P^{\Z}$.
We will see shortly (Definition \ref{def:poly}) that we can define
a function $(\xi,k)\mapsto r_{pol}(\xi,k)$  for $(\xi,k)\in E_P^{\Z}$,
and that the function $(\xi,k)\mapsto r_{pol}(k\xi,k)$ on $R_P\times \Z$
 is a linear combination of functions of the form $k^{d_0} j(k) s(\xi)$ where $j(k)$ is a periodic function of $k$
and $s(\xi)$ a polynomial function of $\xi$, for $\xi$ varying on the affine space $R_P$.

We now can state the general formula.

\begin{proposition}\label{pro:tech2}
Let $P$ be a rational polyhedron in $V$.
Let $q(\lambda,k)$ be a quasi-polynomial function on $\Lambda\oplus \Z$.
Let $r$ be its restriction to
$E_P\cap (\Lambda\oplus \Z)$ and $r_{pol}$ the "polynomial part" of $r$ on
$E_P\cap (V\oplus \Z)$.
Assume that the quasi-polynomial function $r$ is homogeneous of degree $d_0$.
Let $k\geq 1$. The distribution
$$\ll \Theta(P;q)(k),\varphi\rr=\sum_{\lambda\in kP}  q(\lambda,k) \varphi(\lambda/k)$$
admits an asymptotic expansion when $k\to \infty$ of the form
$$k^{\dim P} k^{d_0}\sum_{n=0}^\infty \frac{1}{k^{n}}\ll\theta_{n}(k),\varphi\rr.$$
Furthermore,  the term $k^{d_0}\ll\theta_{0}(k),\varphi\rr$ is given by
$$k^{d_0}\ll\theta_{0}(k),\varphi\rr=\int_P r_{pol}(k\xi,k) \varphi(\xi) d\xi.$$
\end{proposition}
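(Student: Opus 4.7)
The strategy is to reduce Proposition \ref{pro:tech2} to Proposition \ref{pro:tech} by working inside the affine span $R_P$ of $P$, where $P$ has non-empty relative interior. Since $\langle \Theta(P;q)(k),\varphi\rangle$ is supported on lattice points $(\lambda,k)$ that lie in $C_P\cap\tilde\Lambda$ (at height $k$), it depends on $q$ only through its restriction $r$ to $E_P\cap\tilde\Lambda$. Hence from the outset we may work with the data $(E_P,r,d_0)$, and the task becomes pulling the sum back to an honest full-dimensional lattice sum on a linear subspace.

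First I would pick a rational base point $\xi_0\in R_P$, let $V':=R_P-\xi_0$ be the parallel linear subspace (with lattice $\Lambda':=\Lambda\cap V'$), and set $P':=P-\xi_0$, a rational polyhedron in $V'$ with non-empty interior. Choose $D\geq 1$ with $D\xi_0\in\Lambda$, and partition $k$ according to its residue class $k_0$ modulo $D$. For fixed $k_0$, the change of variable $\mu:=\lambda-k\xi_0$ turns the condition $\lambda\in\Lambda$ into $\mu\in\tau(k_0)+\Lambda'$ for a shift $\tau(k_0)\in V'$ depending only on $k_0$ (or else the set is empty, in which case the contribution vanishes on that residue class). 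The sum becomes
$$\langle\Theta(P;q)(k),\varphi\rangle=\sum_{\mu\in kP'\cap(\tau(k_0)+\Lambda')} r(k\xi_0+\mu,k)\,\varphi(\xi_0+\mu/k),$$
which, residue class by residue class, is exactly of the form treated in Proposition \ref{pro:tech}, applied to the linear space $V'$, polyhedron $P'$ (of full dimension $\dim P$), and weight $(\mu,k)\mapsto r(k\xi_0+\mu,k)$. Homogeneity of $r$ on $E_P$ under the scaling $(\lambda,k)\mapsto(c\lambda,ck)$ forces this new weight to be homogeneous of total degree $d_0$ in $(\mu,k)$, and the residual lattice shift $\tau(k_0)$ is absorbed into a periodic factor in $k$.

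Proposition \ref{pro:tech} then produces an asymptotic expansion with leading power $k^{\dim V'}k^{d_0}=k^{\dim P+d_0}$, whose leading coefficient equals $\int_{P'}\widetilde r_{pol}(k\eta,k)\varphi(\xi_0+\eta)d\eta$, where $\widetilde r_{pol}$ denotes the polynomial part (with respect to $\Lambda'$) of the weight $(\mu,k)\mapsto r(k\xi_0+\mu,k)$. The crucial compatibility to check is that $\widetilde r_{pol}(\mu,k)=r_{pol}(k\xi_0+\mu,k)$; this follows by choosing a single sublattice of finite index in $E_P\cap\tilde\Lambda$ on which $r$ is polynomial and whose slice at each height reflects a sublattice of finite index in $\Lambda'$. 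With this identification, the substitution $\xi=\xi_0+\eta$ converts the integral over $P'$ into $\int_P r_{pol}(k\xi,k)\varphi(\xi)d\xi$, which is the stated leading term.

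The main obstacle is the bookkeeping linking three separate averagings: the one defining $r_{pol}$ on $E_P\cap\tilde\Lambda$, the one defining $\widetilde r_{pol}$ on $\Lambda'\oplus\Z$, and the splitting of $k$ into residue classes mod $D$ (which produces the shifts $\tau(k_0)$ and extra periodic factors). Once one chooses a common sublattice of $E_P\cap\tilde\Lambda$ whose restriction to each height-$k$ slice is a coset of a sublattice of $\Lambda'$ on which the restricted weight becomes polynomial, the two notions of polynomial part coincide and the residue-class contributions reassemble into the single asymptotic expansion with periodic coefficients.
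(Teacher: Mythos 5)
Your overall strategy --- pass to the linear space ${\rm lin}(P)$ parallel to the affine span $R_P$, where $P-\xi_0$ has nonempty interior, and invoke Proposition \ref{pro:tech} --- is exactly the paper's. But the way you choose the base point introduces a genuine gap. Picking an arbitrary rational $\xi_0\in R_P$ and $D$ with $D\xi_0\in\Lambda$, and then fixing the residue class $k_0$ of $k$ modulo $D$, leaves you summing over $\mu\in kP'\cap(\tau(k_0)+\Lambda')$ for a generally nonzero shift $\tau(k_0)\in V'$. This is \emph{not} of the form required by Proposition \ref{pro:tech}: after the further substitution $\nu=\mu-\tau(k_0)$ you sum over $\nu\in (kP'-\tau(k_0))\cap\Lambda'$, and $kP'-\tau(k_0)=k\bigl(P'-\tau(k_0)/k\bigr)$ is $k$ times a $k$-dependent polyhedron, not $k$ times a fixed one. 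The sentence ``the residual lattice shift $\tau(k_0)$ is absorbed into a periodic factor in $k$'' is where the argument breaks: a shift of the lattice of summation changes the \emph{domain} of the sum, not a multiplicative weight, and no periodic prefactor in $k$ accounts for it. (Concretely, with $V=\R^2$, $\Lambda=\Z^2$, $P=\{(x,1):0\le x\le 1\}$, and $\xi_0=(1/3,1)$, already $k=1\in I_P$ but $k\xi_0\notin\Lambda$, so a nonzero shift appears even though the sum is nonempty.)

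The paper avoids this by a specific, and necessary, choice of base point. It first observes that $I_P:=\{k\in\Z: kR_P\cap\Lambda\neq\emptyset\}$ is an ideal $(k_0)$ of $\Z$, and that the distribution vanishes identically for $k\notin I_P$. It then picks $\alpha\in R_P$ with $k_0\alpha\in\Lambda$ (possible precisely because $k_0\in I_P$), identifies $E_P\simeq{\rm lin}(P)\oplus\R$ via $(\gamma,u)\mapsto(\gamma+uk_0\alpha,uk_0)$, and reparametrizes $k=uk_0$. Under this identification the lattice $(\Lambda\oplus\Z)\cap E_P$ becomes exactly $\Lambda_0\oplus\Z$ with $\Lambda_0=\Lambda\cap{\rm lin}(P)$ --- no shift at all --- and Proposition \ref{pro:tech} applies verbatim to $P_0=k_0(P-\alpha)$ with weight $q^\alpha(\gamma,u)=r(\gamma+uk_0\alpha,uk_0)$. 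Your argument is repaired by taking $\xi_0=\alpha$ and $D=k_0$; for any other rational $\xi_0$ you would instead have to extend the proof of Proposition \ref{pro:tech} to shifted lattices. Note finally that the paper's $r_{pol}$ (Definition \ref{def:poly}) is \emph{defined} through this $\alpha$-construction and then shown to be independent of the choice of $\alpha$, so the compatibility you flag at the end is, with the right base point, built into the definition rather than something to be re-derived.
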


\begin{proof}
We will reduce the proof of this proposition to the case treated before of a
 polyhedron with interior. Let ${\rm lin}(P)$ be the linear space parallel to $R_P$, and $\Lambda_0:=\Lambda \cap {\rm lin}(P)$.
If $R_P$ contains a point $\beta\in \Lambda$, then $E_P$ is isomorphic to
${\rm lin}(P)\oplus \R$ with lattice $\Lambda_0\oplus \Z$.
Otherwise, we will have to dilate $R_P$.
More precisely,
let $I_P=\{k\in \Z, kR_P\cap \Lambda\neq \emptyset\}$.
This is an ideal in $\Z$.
Indeed if $k_1\in I_P,k_2\in I_P$, $\alpha_1,\alpha_2\in R_P$ are such that $k_1\alpha_1\in \Lambda$, $k_2\alpha_2\in \Lambda$, then  $\alpha_{1,2}=\frac{1}{n_1k_1+n_2k_2}(n_1 k_1 \alpha_1+n_2 k_2 \alpha_2)$
is in $R_P$, and $(n_1k_1+n_2k_2)(\alpha_{1,2})\in \Lambda$.
Thus there exists a smallest $k_0>0$ generating the ideal $I_P$.
We see that our distribution $\Theta(P;p)(k)$
is identically equal to $0$ if $k$ is not in $I_P$.
Let $\delta_{I_P}(k)$ be the function of $k$ with

$$\delta_{I_P}(k)= \begin{cases}
0\hspace{25mm} {\rm if}\,  k\notin I_P ,\\
1 \hspace{25mm} {\rm if}\,  k=u k_0\in I_P.\\
    \end{cases}$$
 This is a periodic function of $k$ of period $k_0$.
We choose $\alpha\in R_P$ such that $k_0\alpha\in \Lambda$.
We identify $E_P$ to ${\rm lin}(P)\oplus \R$ by the map
$T_\alpha(\xi_0,t)= (\xi_0+ t k_0\alpha, t k_0).$
In this identification, the lattice $(\Lambda\oplus \Z)\cap E_P$
becomes the lattice $\Lambda_0\oplus \Z$.
Consider $P_0=k_0(P-\alpha)$, a polyhedron with interior in ${\rm lin}(P)$.
Let $q^{\alpha}(\gamma,u)=r(\gamma+uk_0\alpha,uk_0)$.
This is a quasi-polynomial function  on $\Lambda_0\oplus \Z$.
Its total degree is $d_0$.
We have defined its polynomial part $q_{pol}^{\alpha}(\xi,u)$ for $\xi\in {\rm lin}(P)$, $u\in \Z$.

\begin{definition}\label{def:poly}

Let $(\xi,k)\in E_P^{\Z}$.
Define:
$$r_{pol}(\xi,k)= \begin{cases}
0\hspace{45mm} {\rm if}\,  k\notin I_P ,\\
q_{pol}^{\alpha}(\xi-uk_0\alpha,u)\hspace{18mm} {\rm if}\,  k=u k_0\in I_P.\\
    \end{cases}$$

\end{definition}

The function
$r_{pol}(\xi,k)$ does not depend of the choice of $\alpha$.
Indeed, if $\alpha,\beta\in R_P$ are such that $k_0\alpha,k_0\beta\in \Lambda$,
then
$q^{\beta}(\gamma,u)=q^{\alpha}(\gamma+uk_0(\beta-\alpha),u).$
Then  we see that
$q_{pol}^{\beta}(\xi,u)=q_{pol}^{\alpha}(\xi+uk_0(\beta-\alpha),u)$.
Furthermore, the function
$(k,\xi)\mapsto r_{pol}(k\xi,k)$ is of the desired form, a linear combination of functions
$\delta_{I_P}(k)j(k) k^{d_0} s(\xi)$ with $s(\xi)$ polynomial functions on $R_P$.

If $\varphi$ is a test function on $V$, we define the test function $\varphi_0$ on ${\rm lin}(P)$
by $\varphi_0(\xi_0)=\varphi(\frac{\xi_0}{k_0}+\alpha)$.
We see that

\begin{equation}\label{eq:Thetazeroa}
\ll\Theta(P;q)(u k_0),\varphi\rr=\ll\Theta(P_0;q^{\alpha})(u),\varphi_0\rr.
\end{equation}
Thus we can apply Proposition \ref{pro:tech}.
We obtain
$$\ll\Theta(P;q)(u k_0),\varphi\rr\equiv u^{\dim P} u^{d_0}\sum_{n=0}^{\infty}
\frac{1}{u^n}\ll\omega_n(u),\varphi_0\rr.$$
We have
$$u^{d_0}\ll\omega_0(u),\varphi_0\rr=\int_{P_0}q_{pol}^{\alpha}(u\xi_0,u)\varphi_0(\xi_0)d\xi_0
=\int_{P_0}q_{pol}^{\alpha}(u\xi_0,u)\varphi(\frac{\xi_0}{k_0}+\alpha)$$
When $\xi_0$ runs in $P_0=k_0(P-\alpha)$, $\xi=\frac{\xi_0}{k_0}+\alpha$
runs over $P$. Changing variables, we obtain
$$u^{d_0}\ll\omega_0(u),\varphi_0\rr=k^{d_0}\int_P r_{pol}(k\xi,k)\varphi(\xi)d\xi.$$
Thus we obtain our proposition.
\end{proof}

\bigskip

Let $P$ be a rational polyhedron in $V$ and $q$ a quasi-polynomial function on $\Lambda\oplus \Z$.
We do not assume that $P$ has interior in $V$.
We denote by $[C_P]$ the characteristic function of  $C_P$.
Then the function $q(\lambda,k)[C_P](\lambda,k)$ is zero if $(\lambda,k)$ is not in $C_P$
or equal to $q(\lambda,k)$ if $(\lambda,k)$ is in $C_P$.
We denote it by $q[C_P]$.
The space of functions on $ \Lambda\oplus \Z$  we will study
is the following space.

\begin{definition}
We define the space $\CS(\Lambda)$
to be the space of functions on $ \Lambda\oplus \Z$
linearly generated by the functions $q[C_P]$  where $P$
 runs over rational polyhedrons in $V$ and $q$ over quasi-polynomial functions on $ \Lambda\oplus \Z$.
\end{definition}

The representation of $m$ as a sum of functions  $q[C_P]$ is not unique.
For example, consider $V=\R$, $P=\R, P_+:=\R_{\geq 0}, P_-:=\R_{\leq 0}, P_0:=\{0\}$,
then $[C_P]=[C_{P_+}]+[C_{P_-}]-[C_{P_0}]$.

\begin{example}\label{exa:imp}
An important example of functions $m\in \CS(\Lambda)$ is the following.
Assume that we have a closed cone $C$ in $V\oplus \R$, and a covering
$C=\cup_\alpha C_\alpha$ by closed  cones.
Let $m$ be a function on $C\cap (\Lambda\oplus \Z)$, and assume that the restriction of
$m$ to $C_\alpha\cap (\Lambda\oplus \Z)$
is given by a quasipolynomial function $q_\alpha$.
Then, using exclusion-inclusion formulae, we see that $m\in \CS(\Lambda)$.
\end{example}

\begin{definition}

If $m(\lambda,k)$ belongs to $\CS(\Lambda)$, and $g\in T$ is an element of finite order, then define
$$m^{g}(\lambda,k)=g^{\lambda}m(\lambda,k).$$
\end{definition}
The function $m^g$ belongs to
$\CS(\Lambda)$.

If $m\in \CS(\Lambda)$, and $k\geq 1$, we denote by
$\Theta(m)(k)$ the distribution on $V$ defined by
$$\ll\Theta(m)(k),\varphi\rr=
\sum_{\lambda\in \Lambda}m(\lambda,k) \varphi(\lambda/k),$$
if $\varphi$ is a test function on $V$.
The following proposition follows immediately  from Proposition \ref{pro:tech2}.

\begin{proposition}
If $m(\lambda,k)\in \CS(\Lambda)$, the distribution $\Theta(m)(k)$ admits an asymptotic expansion $A(m)(\xi,k)$.
\end{proposition}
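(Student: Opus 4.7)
The plan is to use the linearity of the construction and reduce immediately to the generating case. By definition, any $m\in\CS(\Lambda)$ can be written as a finite linear combination $m=\sum_i c_i\,q_i[C_{P_i}]$ where each $P_i$ is a rational polyhedron in $V$ and each $q_i$ is a quasi-polynomial function on $\Lambda\oplus\Z$. Since $\Theta(\cdot)(k)$ is linear in $m$, and since the notion of admitting an asymptotic expansion with periodic coefficients is stable under finite linear combinations (the space of such expansions is a vector space once we allow arbitrary $n_0\in\Z$ and allow padding with zero coefficients to line up the leading powers of $k$), it suffices to handle a single generator $m=q[C_P]$.

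For such a generator I would next observe that, by the very definition of $C_P=\{(t\xi,t):t\geq 0,\ \xi\in P\}$, for $k\geq 1$ the support condition $(\lambda,k)\in C_P$ is equivalent to $\lambda\in kP\cap\Lambda$. Hence
\[
\ll\Theta(q[C_P])(k),\varphi\rr=\sum_{\lambda\in kP\cap\Lambda}q(\lambda,k)\varphi(\lambda/k)=\ll\Theta(P;q)(k),\varphi\rr,
\]
so we are reduced exactly to the setup of Proposition \ref{pro:tech2}.

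The remaining issue is that Proposition \ref{pro:tech2} is stated for a quasi-polynomial $q$ whose restriction $r$ to $E_P\cap(\Lambda\oplus\Z)$ is homogeneous of some degree $d_0$. A general quasi-polynomial function on $\Lambda\oplus\Z$ is, by the grading described in Subsection \ref{subsecPW}, a finite sum $q=\sum_{d}q^{(d)}$ of total-degree-homogeneous pieces. Restricting to $E_P\cap(\Lambda\oplus\Z)$ preserves being a finite sum of homogeneous pieces (possibly of lower degrees after restriction), so
\[
\Theta(P;q)(k)=\sum_{d}\Theta(P;q^{(d)})(k),
\]
and Proposition \ref{pro:tech2} applies term by term. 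Summing the resulting asymptotic expansions (each of the form $k^{\dim P}k^{d_0}\sum_{n\geq 0}k^{-n}\theta_n(k)$) yields an asymptotic expansion for $\Theta(q[C_P])(k)$, which we call $A(q[C_P])(\xi,k)$.

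The only step that requires any care — and which I would flag as the main (minor) obstacle — is checking that the class of sequences admitting an asymptotic expansion with periodic coefficients is closed under finite sums when the leading exponents $n_0$ and the periods of the periodic coefficients differ. This is handled by taking the maximum of the leading exponents, padding the lower-order expansions with zero coefficients in the missing top powers of $k$, and replacing each periodic coefficient by the same function viewed as periodic with period the least common multiple of the original periods. Once this bookkeeping is done, linearity finishes the proof and defines $A(m)=\sum_i c_i A(q_i[C_{P_i}])$.
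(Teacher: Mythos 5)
Your proof is correct and matches the paper's own approach: the paper disposes of this proposition with the single remark that it ``follows immediately from Proposition~\ref{pro:tech2},'' and your argument supplies exactly the linearity and degree-decomposition bookkeeping that is being left implicit. The only microscopic imprecision is the parenthetical claim that restricting a homogeneous piece of total degree $d$ to $E_P\cap(\Lambda\oplus\Z)$ may lower the degree --- in fact it yields a homogeneous piece of the same degree $d$ or zero --- but this does not affect the argument.
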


The function $m(\lambda,k)$ can be non zero, while $A(m)(\xi,k)$ is zero.
For example let $V=\R$,  $P=\R$ and $m(\lambda,k)=(-1)^{\lambda}$.
Then $\Theta(m)(k)$ is the distribution on $\R$ given by ${\rm T}(k)=\sum_{\lambda=-\infty}^{\infty}(-1)^\lambda\delta_{\lambda/k}, k\geq 1$
and this
is equivalent to $0$.
However, here is an unicity theorem.

\begin{theorem}\label{theo:unique}
Assume that $m\in\CS(\Lambda)$ is such that
$A(m^g)=0$ for all $g\in T$ of finite order, then $m=0$.
\end{theorem}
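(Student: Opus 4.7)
The plan is to argue by induction on the maximal dimension $d^*$ of the polyhedra appearing in a representation $m = \sum_i q_i [C_{P_i}]$, with $q_i$ quasi-polynomial on $\Lambda \oplus \Z$ and $P_i$ rational polyhedra in $V$. After a preliminary refinement---subdividing the $P_i$ along their mutual relative boundaries and combining terms with coinciding underlying polyhedra (inclusion-exclusion in the algebra generated by characteristic functions of rational cones)---I arrange that the top-dimensional pieces, those with $\dim P_i = d^*$, have pairwise disjoint relative interiors. The base case (empty representation) gives $m = 0$ trivially.

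Fix one top-dimensional piece $P_{i_0}$ in the refined representation. I choose $\eta^* \in \mathrm{relint}(P_{i_0})$ generically, so that $\eta^* \notin R_{P_j}$ for every $j \neq i_0$, and I let $\varphi$ be a test function supported in a small neighborhood $U$ of $\eta^*$ in $V$. The key localization observation is this: for every such $j$, $(\eta^*, 1) \notin E_{P_j}$, so I can pick a linear functional $\ell$ on $V \oplus \R$ vanishing on $E_{P_j}$ with $\ell(\eta^*, 1) \neq 0$; any $(\lambda, k) \in E_{P_j} \cap (\Lambda \oplus \Z)$ with $\lambda/k \in U$ would force $\ell(\lambda/k, 1) = \ell(\lambda, k)/k$ to be simultaneously zero and close to $\ell(\eta^*, 1)$, which is impossible once $U$ is small enough. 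Hence, for $k$ sufficiently large,
$$
\ll \Theta(m^g)(k), \varphi \rr = \ll \Theta(P_{i_0}; g^\lambda q_{i_0})(k), \varphi \rr.
$$

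By Proposition \ref{pro:tech2}, the right-hand side has leading asymptotic
$$
k^{\dim P_{i_0} + d_0} \int_{P_{i_0}} (g^\lambda q_{i_0})_{\mathrm{pol}}(k\xi, k)\, \varphi(\xi)\, d\xi,
$$
where $d_0$ is the total degree of $q_{i_0}$ restricted to $E_{P_{i_0}} \cap (\Lambda \oplus \Z)$. Writing this restriction in the canonical form $\sum_t t^\lambda h_t(\lambda, k)$ with $t$ ranging over finitely many finite-order characters of the appropriate torus and $h_t$ polynomial in $\lambda$ (quasi-polynomial in $k$), the character-averaging definition of the polynomial part picks out exactly the $t = g^{-1}$ term, giving $(g^\lambda q_{i_0})_{\mathrm{pol}}(\xi, k) = h_{g^{-1}}(\xi, k)$. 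Since $A(m^g) = 0$ makes every coefficient of the asymptotic vanish, I get $\int_{P_{i_0}} h_{g^{-1}}(k\xi, k)\, \varphi(\xi)\, d\xi = 0$ for every such $\varphi$; polynomiality in $\xi$ then upgrades this to $h_{g^{-1}} \equiv 0$ on $R_{P_{i_0}}$. Letting $g$ range over the torsion subgroup of $T$ kills every $h_t$, so $q_{i_0}[C_{P_{i_0}}] = 0$ as a function on $\Lambda \oplus \Z$. Repeating this for each top-dimensional piece and deleting the now-zero terms produces a representation of $m$ using only cones of dimension $< d^*$, with the hypothesis $A(m^g) = 0$ still satisfied, and the induction concludes. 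The step I expect to require the most care is the refinement together with the single-cone geometric localization in the second paragraph; once that is in place, the character-theoretic extraction of each $h_t$ is essentially formal.
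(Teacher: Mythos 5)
Your proof follows essentially the same route as the paper's: reduce via localization to a single top-dimensional piece $q_{i_0}[C_{P_{i_0}}]$, apply Proposition \ref{pro:tech2}, extract the individual character components $h_t$ by twisting with $g$ and reading off the polynomial part, then peel off the top-dimensional cones and induct on dimension. The paper does this with an explicit filtration $\CS_\ell(\Lambda)$, first treating the single-$P$ case (with interior, then general via the $k_0$-dilation) as a lemma, whereas you fold these into one induction, but the mechanisms are the same.

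Two small imprecisions are worth tightening. First, when you request $\eta^* \notin R_{P_j}$ for \emph{every} $j\neq i_0$, this is impossible for a top-dimensional $P_j$ sharing the affine span $R_{P_{i_0}}$; there, $(\eta^*,1)\in E_{P_j}$ and the linear-functional argument cannot run. What saves you is that after refinement the relative interiors of top-dimensional pieces are disjoint, so $\eta^*\in\mathrm{relint}(P_{i_0})$ has a neighborhood $U$ with $U\cap P_j=\emptyset$ directly; you should split the $j$'s into two cases accordingly. Second, the passage ``$A(m^g)=0$ gives $\int_{P_{i_0}} h_{g^{-1}}(k\xi,k)\varphi(\xi)\,d\xi=0$'' is too quick: Proposition \ref{pro:tech2}'s leading coefficient isolates only the top-degree homogeneous piece of $h_{g^{-1}}$, and lower coefficients mix in Euler--Maclaurin corrections from the $t\neq g^{-1}$ components. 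The clean version is the paper's: suppose $q_{i_0}|_{E_{P_{i_0}}}\neq 0$, let $D$ be its total degree, choose $t$ with $\deg h_t=D$, set $g=t^{-1}$; the $k^{\dim P_{i_0}+D}$ coefficient of $A(m^g)$ is then a nonzero multiple of $\int_{P_{i_0}} h_t^{\mathrm{top}}\varphi$, contradicting $A(m^g)=0$. (Equivalently, iterate on degree: varying $g$ kills $h_t^D$ for all $t$, dropping $D$ by one.) With these repairs the argument is sound and matches the paper's.
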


\begin{proof}
We start by the case of a function $m=q [C_P]$ associated to a single polyhedron
$P$ and a quasi-polynomial function $q$.
Assume first that $P$ is with non empty interior $P^0$.
If $q$ is not identically $0$, we write
 $q(\lambda,k)=\sum_{g\in T} g^{-\lambda}p_g(\lambda,k)$ where $p_g(\lambda,k)$ are
 polynomials in $\lambda$.
 If $d$ is the total degree of $q$, then all the polynomials  $p_g(\lambda,k)$
 are  of
 degree less or equal than $d$.
We choose $t\in T$ such that $p_t(\lambda,k)$ is of degree $d$.
If we consider the quasi-polynomial $q^t(\lambda,k)$,
 then its polynomial part is $p_t(\lambda,k)$ and the homogeneous component
 $p_t^{top}(\lambda,k)$ of degree $d$ is not zero.
 We write $p_t^{top}(\xi,k)=
 \sum_{\zeta,a} \zeta^k k^a p_{\zeta,a}(\xi)$ where $p_{\zeta,a}(\xi)$
 is a polynomial in $\xi$ homogeneous of degree $d-a$.
Testing against a test function $\varphi$
 and computing the  term in $k^{d+\dim V}$ of the asymptotic expansion by Proposition \ref{pro:tech}, we see that
$\sum_{\zeta,a} \zeta^k k^d\int_P p_{\zeta,a}(\xi) \varphi(\xi)d\xi=0$.
 This is true for any test function $\varphi$.
 So, for any $\zeta$, we  obtain $\sum_a p_{\zeta,a}(\xi)=0$.
 Each of the $p_{\zeta,a}$ being homogeneous of degree $d-a$,
 we see that $p_{\zeta,a}=0$  for any $a,\zeta$.
 Thus $p_t^{top}=0$, a contradiction.
 So we obtain that $q=0$, and $m=q[C_P]=0$.
 Remark that to obtain this conclusion, we may use only test functions $\varphi$
 with support contained in the interior $P^0$ of $P$.

Consider now a general polyhedron $P$ and the vector space ${\rm lin}(P)$.
Let us prove that $m(\lambda,k)=q(\lambda,k)[C_P](\lambda,k)$ is identically $0$ if
$A(m^g)=0$ for any $g\in T$ of finite order.
Using the notations of the proof of Proposition \ref{pro:tech2},
we see that $m(\lambda,k)=0$, if $k$ is not of the form $ uk_0$.
Furthermore, if $q^\alpha(\gamma,u)=q(\gamma+uk_0\alpha,uk_0)$,
it is sufficient to prove that
$q^{\alpha}=0$.
Let $P_0=k_0(P-\alpha)$, a polyhedron with interior in $V_0$.
Consider $m_0=q^{\alpha}[C_{P_0}]$.
We consider $T$ as the character group of $\Lambda$, so $T$ surjects on
$T_0$.
Let $g\in T$ of finite order and such that $g^{k_0\alpha}=1$, and let
$g_0$ be the restriction of $g$ to $\Lambda_0$.
Using Equation \ref{eq:Theta0},
we then see that $$\ll\Theta(m^g)(u k_0),\varphi\rr=
\ll\Theta(m_0^{g_0})(u),\varphi_0\rr.$$

Any $g_0\in T_0$ of finite order is the restriction to
$\Lambda_0$ of an element $g\in T$ of finite order and such that
$g^{k_0\alpha}=1$.
So we conclude that the asymptotic expansion, when $u$ tends to $\infty$,  of
$\Theta(m_0^{g_0})(u)$ is equal to $0$ for any $g_0\in T_0$ of finite order.
Remark again that we need only to know that
$\ll\Theta(m^g)(k),\varphi\rr\equiv 0$
for test functions $\varphi$ such that the support $S$ of $\varphi$ is
contained in a very small neighborhood of compact subsets of
$P$ contained in the relative interior of $P^0$.

For any integer $\ell$,
 denote by $\CS_\ell(\Lambda)$ the subspace of functions $m\in \CS(\Lambda)$ generated by the functions $q[C_P]$ with $\dim P\leq \ell$.

When $\ell=0$, our polyhedrons are  a finite number of rational points $f\in V$, the function $m(\lambda,k)$ is supported on the union of lines $(ud_ff,ud_f)$ if $d_f$ is the smallest integer such that $d_ff$ is in $\Lambda$.
Choose a test function $\varphi$ with support near $f$.
Then $u\mapsto\ll\Theta(m)(d_f u),\varphi\rr$ is identical to its asymptotic expansion
  $m(ud_ff, ud_f) \varphi(f).$
  Clearly we obtain that $m=0$.

  If $m\in \CS_\ell(\Lambda)$ by inclusion-exclusion, we can write
  $$m=\sum_{P; \dim(P)=\ell}q_P[C_P]+\sum_{H, \dim H<\ell} q_H [C_H] $$
  and we can assume that the intersections of a polyhedron $P$
  occurring in the first sum, with any polyhedron $P'$  occurring in the decomposition of $m$ and different from $P$ is of dimension strictly less than $\ell$.
  Consider $P$ in the first sum, so  $\dim(P)=\ell$.
  We can thus choose test functions $\varphi$ with support in small neighborhoods of
  $K$, with $K$ a compact subset contained in the relative interior of $P$.
  Then $$\ll\Theta(m^g)(k),\varphi\rr=\ll\Theta(q_P^g[C_P])(k),\varphi\rr.$$
 The preceding argument shows that $q_P[C_{P}]=0$.
   So $m\in \CS_{\ell-1}(\Lambda)$.
   By induction $m=0$.
\end{proof}

\section{Composition of piecewise quasi-polynomial functions}
Let $V_0,V_1$  be  vector spaces with lattice $\Lambda_0,\Lambda_1$.

Let $C_{0,1}$ be a closed polyhedral rational cone in $V_0\oplus V_1$
(containing the origin).
Thus for any $\mu\in \Lambda_1$, the set of $\lambda\in V_0$ such that
$(\lambda,\mu)\in C_{0,1}$ is a rational polyhedron $P(\mu)$ in $V_0$.
Let $P$ be a polyhedron in $V$.
We assume that for any $\mu\in \Lambda_1$,
$P\cap P(\mu)$ is compact.
Thus,  for $m=q_P[C_P]\in \CS(\Lambda)$,
and $c(\lambda,\mu)$ a quasi-polynomial function  on
$\Lambda_0\oplus \Lambda_1$, we can compute

$$m_c(\mu,k)=\sum_{(\lambda,\mu)\in C_{0,1}} m(\lambda,k) c(\lambda,\mu).$$

\begin{proposition}\label{pro:wanted}
The function $m_c$ belongs to
$\CS(\Lambda_1)$.
\end{proposition}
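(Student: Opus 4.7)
Introduce the closed rational polyhedral cone
\begin{equation*}
K := \bigl\{(\lambda,\mu,k) \in V_0\oplus V_1\oplus \R : (\lambda,\mu)\in C_{0,1},\ (\lambda,k)\in C_P\bigr\}
\end{equation*}
and the quasi-polynomial function $F(\lambda,\mu,k) := q_P(\lambda,k)\,c(\lambda,\mu)$ on $\Lambda_0\oplus\Lambda_1\oplus\Z$. Then $m_c(\mu,k) = \sum_{\lambda \in \Lambda_0} F(\lambda,\mu,k)\,[K](\lambda,\mu,k)$. The assumption that $P\cap P(\mu)$ is compact for every $\mu\in\Lambda_1$ forces the $\lambda$-slice $K_{(\mu,k)}$ to be a bounded rational polytope for every $(\mu,k)$, so the sum is finite, and $m_c$ is supported in the closed rational polyhedral cone $\pi(K)\subset V_1\oplus\R$, where $\pi$ denotes the projection along $V_0$.

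My plan is then to construct a finite covering $\pi(K) = \bigcup_\alpha D_\alpha$ by closed rational polyhedral subcones, together with quasi-polynomial functions $Q_\alpha$ on $\Lambda_1\oplus\Z$, such that $m_c$ coincides with $Q_\alpha$ on $D_\alpha\cap(\Lambda_1\oplus\Z)$. Once this is achieved, Example~\ref{exa:imp} applies verbatim and gives $m_c\in\CS(\Lambda_1)$. The cones $D_\alpha$ should be chosen so that on the relative interior of each $D_\alpha$ the combinatorial type of the slice $K_{(\mu,k)}$ is constant: the same facets of $K$ are active over every parameter in $\mathrm{relint}\,D_\alpha$, and the vertices of $K_{(\mu,k)}$ depend affinely on $(\mu,k)$.

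On each chamber, I would mirror the argument in the proof of Proposition~\ref{pro:tech}: Brianchon-Gram expresses the indicator of $K_{(\mu,k)}$ as a signed sum of tangent cones at its (parametric) vertices, each of which is cut by a Barvinok-style decomposition into simplicial unimodular cones along $V_0$. The partial sum of $F(\cdot,\mu,k)$ over a shifted unimodular lattice cone is a product of one-variable geometric sums, of exactly the form $e^{xz}/(\zeta e^z - 1)$ already used in the proof of Proposition~\ref{pro:tech}, and one checks that after taking into account the character data of $q_P$ and $c$ it depends quasi-polynomially on $(\mu,k)$ throughout $D_\alpha$. Summing these signed contributions defines $Q_\alpha$. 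The main obstacle is the uniformity of this decomposition in the parameters: the Brianchon-Gram / Barvinok reduction must be produced coherently across each chamber so that a single quasi-polynomial $Q_\alpha$ arises. This is the parametric counterpart of the unimodular reduction already carried out in Proposition~\ref{pro:tech}, and the fact that $C_{0,1}$ is a cone (rather than a general polyhedron) is what ensures that all resulting pieces $D_\alpha$ remain cones in $V_1\oplus\R$.
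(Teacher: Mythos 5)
Your strategy coincides with the paper's: decompose the parameter cone into closed rational subcones on which the fibre polytope $kP\cap P(\mu)$ has constant combinatorial type, show that summing a quasi-polynomial over the lattice points of such a parametric polytope yields a quasi-polynomial function of $(\mu,k)$ on each chamber, and conclude via Example~\ref{exa:imp}. There are two packaging differences. First, the paper separates variables at the outset, expanding $c(\lambda,\mu)$ and $q_P(\lambda,k)$ into sums of products of one-variable quasi-polynomials, which reduces the problem to the inner sum $S(q)(\mu,k)=\sum_{\lambda\in kP\cap P(\mu)} q(\lambda)$ with $q$ depending on $\lambda$ alone; you will need this step as well, since the $(\mu,k)$-dependence of the summand has to be pulled out of the lattice sum before the parametric summation lemma can be applied in the form you want. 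Second, the ``parametric uniformity'' you correctly flag as the crux --- a chamber decomposition of the parameter cone with a coherent Brianchon--Gram reduction producing a single quasi-polynomial on each chamber --- is precisely the content of \cite{sze-ver} (Theorem~3.8) and \cite{ber-ver-passemuraille} (Theorem~54); the paper writes $kP\cap P(\mu)=\p(\mathbf b(\mu,k))$ with $\mathbf b$ a linear map into $\Z^N$ and cites those results rather than re-deriving them as you propose. So the proposal is correct and follows essentially the same route; either invoke those theorems directly or carry through the chamber-wise sketch you outline, but in both cases perform the separation of variables first.
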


Before establishing this result, let us give an example, which occur
for example in the problem of computing the multiplicity of a representation $\chi^\lambda\otimes \chi^\lambda$ of $SU(2)$ restricted to the maximal torus.

\begin{example}
Let $V_0=V_1=\R$, and
$\Lambda_0=\Lambda_1=\Z$. Let $P:=[0,2]$, and let
$$q(\lambda,k)=\begin{cases}
\frac{1}{2}(1-(-1)^\lambda) \hspace{1cm} {\rm if}\,\, 0\leq \lambda \leq 2 k\\
0 \hspace{4cm} {\rm otherwise}.\\
\end{cases}
$$

Let $$C_{0,1}=\{(x,y)\in \R^2; x\geq 0, -x\leq y\leq x\}$$
and $$c(\lambda,\mu)=\frac{1}{2}(1-(-1)^{\lambda-\mu}).$$

Let $\mu\geq 0$. Then
$$m_c(\mu,k)=\frac{1}{4}\sum_{0\leq \lambda\leq 2k, \lambda\geq \mu}
(1-(-1)^{\lambda})(1-(-1)^{\lambda-\mu})=\left( 1+ \left( -1 \right) ^{\mu} \right)  \left( k/2-\mu/4 \right).$$

So if $P_1=[0,2], P_2:=[-2,0], P_3:=\{0\}$,
we obtain

$$m_c=q_1[C_{P_1}]+q_2[C_{P_2}]+q_3 [C_{P_3}]$$

with

$$\begin{cases}q_1(\mu,k)=\left( 1+ \left( -1 \right) ^{\mu} \right)  \left( k/2-\mu/4 \right),\\
 q_2(\mu,k)=\left( 1+ \left( -1 \right) ^{\mu} \right)  \left( k/2+\mu/4 \right),\\
q_3(\mu,k)=-k.\\
\end{cases}
$$

\end{example}

We now start the proof of Proposition \ref{pro:wanted}.

\begin{proof}
Write $c(\lambda,\mu)$ as a sum of products of
quasi-polynomial functions $q_j(\lambda)$, $f_j(\mu)$, and $q_P(\lambda,k)$
a sum of products of quasi-polynomial functions $m_\ell(k)$, $h_\ell(\lambda)$.
Then we see that it is thus sufficient to prove that, for $q(\lambda)$ a quasi-polynomial function of $\lambda$, the function
\begin{equation}\label{eq sum}
S(q)(\mu,k)= \sum_{\lambda \in kP\cap P(\mu)} q(\lambda)
\end{equation}
belongs to $\CS(\Lambda_1)$.
For this, let us recall some results on families of polytopes $\p({\bf b})\subset E$ defined by linear inequations. See for example
\cite{ber-ver-passemuraille}, or \cite{sze-ver}.

Let $E$ be a vector space, and $\omega_i, i=1,\ldots,N$ be a sequence of linear forms on $E$. Let ${\bf b}=(b_1,b_2,\ldots, b_N)$ be an element of $\R^N$.
Consider the polyhedron $\p({\bf b})$ defined by the
inequations $$\p({\bf b})=\{v\in E; \ll\omega_i,v\rr\leq b_i, i=1,\ldots,N\}.$$
We assume $E$  equipped with a lattice $L$, and inequations
$\omega_i$  defined by elements of $L^*$.
Then if the parameters $b_i$  are in $\Z^N$, the polytopes $\p({\bf b})$ are rational convex polytopes.

Assume that there exists ${\bf b}$ such that $\p({\bf b})$ is compact (non empty).
Then $\p({\bf b})$ is compact (or empty) for any ${\bf b}\in \R^N$.
Furthermore, there exists a closed  cone $\CC$ in $\R^N$ such that
$\p({\bf b})$ is non empty if and only if ${\bf b}\in {\mathcal C}$.
There is a decomposition $\CC=\cup_\alpha \CC_\alpha$ of
$\mathcal C$ in  closed polyhedral cones with non empty interiors,
where  the polytopes $\p({\bf b})$, for ${\bf b}\in \CC_\alpha$, does not change of shape.
More precisely:

$\bullet$
 When $\bf b$  varies in the interior of $\CC_\alpha$,
the polytope $\p({\bf b})$
remains with the same number of vertices $\{s_1({\bf b}), s_2({\bf b}),\ldots, s_L({\bf b})\}$.

$\bullet$
for each $1\leq i\leq L$,
there exists a cone $C_i$ in $E$, such that
the tangent cone to the polytope $\p({\bf b})$ at the vertex $s_i({\bf b})$ is the affine cone
$s_i({\bf b})+C_i$.

$\bullet$
the map ${\bf b}\to s_i({\bf b})$ depends  of the parameter ${\bf b}$, via linear maps
$\R^N \to E$  with rational coefficients.

Furthermore -as proven for example in \cite{ber-ver-passemuraille}- the Brianchon-Gram decomposition
of $\p({\bf b})$ is "continuous" in ${\bf b}$ when $b$ varies on $C_\alpha$, in a sense discussed in \cite{ber-ver-passemuraille}.

Before continuing,
let us give a very simple example, let $b_1,b_2, b_3$ be $3$ real parameters
and consider $\p(b_1,b_2,b_3)=\{x\in \R, x\leq b_1, -x\leq b_2, -x\leq b_3\}.$
So we are studying the intersection of the interval $[-b_2,b_1]$ with the half line $[-b_3,\infty]$.
Then for $\p({{\bf b}})$ to be non empty, we need that ${\bf b}\in \CC$,
with
$$\CC=\{{{\bf b}};  b_1+b_2\geq 0, b_1+b_3\geq 0\}.$$

Consider $\CC=\CC_1\cup \CC_2$, with
$$\CC_1=\{{{\bf b}}\in \CC; b_2-b_3\geq 0\},$$
$$\CC_2=\{{{\bf b}}\in \CC; b_3-b_2\geq 0\}.$$

On $\CC_1$ the vertices of $\p({{\bf b}})$ are $[-b_3,b_1]$, while on
$\CC_2$ the vertices of $\p({{\bf b}})$ are $[-b_2,b_1]$.

The Brianchon-Gram decomposition of $\p({{\bf b}})$  for ${\bf b}$ in the interior of $\CC_1$ is $[-b_3,\infty]+[-\infty,b_1]-\R$.
If ${\bf b}\in \CC_1$ tends to the point $(b_1,b_2,-b_1)$ in the boundary of $\CC$, we see
the Brianchon-Gram decomposition tends to
that  $[b_1,\infty]+[-\infty,b_1]-\R$, which is indeed the polytope $\{b_1\}$.

Let  $q(\gamma)$ be a quasi-polynomial function of $\gamma\in L$.
Then, when ${\bf b}$ varies in $\CC_\alpha\cap \Z^N$, the function
$$S(q)({\bf b})=\sum_{\gamma\in \p({\bf b})\cap L} q(\gamma)$$
is given by a quasi-polynomial function of $b$.
This is proven in \cite{sze-ver}, Theorem 3.8. In this theorem, we sum an exponential polynomial function $q(\gamma)$ on the lattice points of $\p({\bf b})$ and obtain an exponential polynomial function of  the parameter $b$. However, the explicit formula shows that if we sum up a quasi-polynomial function of $\gamma$,  then we obtain a quasi-polynomial function of ${\bf b}\in \Z^N$.
Another proof follows from \cite{ber-ver-passemuraille} (Theorem 54) and the continuity of Brianchon-Gram decomposition.
In \cite{ber-ver-passemuraille},  only the summation of polynomial functions is studied, via a Brianchon-Gram decomposition, but the same proof gives the result for quasi-polynomial functions
(it depends only of  the fact that the vertices vary via rational linear functions  of ${\bf b}$).
The relations between partition polytopes $P_\Phi(\xi)$ (setting used in \cite{sze-ver}, \cite{ber-ver-passemuraille})
and families of polytopes $\p({\bf b})$ is standard, and
is explained for example in the introduction of \cite{ber-ver-passemuraille}.

Consider now our situation with $E=V$ equipped with the lattice $\Lambda$.
The polytope $kP\subset V$ is given by a sequence of inequalities
$\omega_i(\xi)\leq k a_i$, $i=1,\ldots, I$,
 where we can assume
  $\omega_i\in \Lambda^*$ and $a_i\in \Z$ by eventually multiplying by a large integer the inequality.
The polytope $P(\mu)$ is given by a sequence of inequalities
$\omega_j(\xi)\leq \nu_j(\mu)$, $j=1,\ldots,J$ where $\nu_j$ depends linearly on
$\mu$. Similarly we can assume $\nu_j(\mu)\in \Z$.
Let $$(\mu,k)\mapsto {{\bf b}}(\mu,k)=[ka_1,\ldots, ka_I, \nu_1(\mu),\ldots,\nu_J(\mu)]$$
a linear map from $\Lambda_1\oplus \Z$  to $\Z^N$.
Our polytope $kP\cap P(\mu)$ is the polytope $\p({{\bf b}}(k,\mu))$
and $$S(q)(\mu,k)=\sum_{\lambda\in \p({{\bf b}}(k,\mu))\cap \Lambda} q(\lambda)=S(q)({{\bf b}}(\mu,k)).$$

Consider one of the cones $\CC_\alpha$. Then ${{\bf b}}(\mu,k)\in \CC_\alpha$, if and only if  $(\mu,k)$ belongs to a rational polyhedral cone $C_\alpha$ in $V_1\oplus \R$.
If $Q$ is a quasi-polynomial function of ${\bf b}$, then
$Q({{\bf b}}(\mu,k))$ is a quasi-polynomial function of $(\mu,k)$.
 Thus on each of the cones $C_\alpha$, $S(q)(\mu,k)$ is given by a quasi-polynomial function of $(\mu,k)$.
 From Example \ref{exa:imp}, we conclude that $S(q)$ belongs to $\CS(\Lambda_1)$.
 \end{proof}

\section{Piecewise quasi-polynomial functions  on the Weyl chamber}
For applications, we have also to consider the following situation.

Let $G$ be a compact Lie group.
Let $T$ be a maximal torus of $G$, $\t$ its Lie algebra, $W$ be the Weyl group.
Let $\Lambda\subset \t^*$ be the weight lattice of $T$.
We choose a system $\Delta^+\subset \t^*$ of positive roots,
and let $\rho\in \t^*$ be the corresponding element.
 We consider the positive Weyl chamber $\t^*_{\geq 0}$ with interior
 $\t^*_{> 0}$.

We consider now $\CS_{\geq 0}(\Lambda)$ the space of functions generated by the functions  $q[C_P]$ with polyhedrons $P$ contained in $\t^*_{\geq 0}$.
This is a subspace of $\CS(\Lambda)$.
If $t\in T$ is an element of finite order, the function $m^t(\lambda,k)=t^{\lambda} m(\lambda,k)$
is again in
$\CS_{\geq 0}(\Lambda)$.

If $m\in \CS_{\geq 0}(\Lambda)$, we define the following anti invariant distribution
with value on a test function $\varphi$ given by
$$\ll\Theta_a(m)(k),\varphi\rr
=\frac{1}{|W|}\sum_{\lambda}
m(\lambda,k) \sum_{w\in W}\epsilon(w) \varphi(w(\lambda+\rho)/k)$$

\begin{proposition}
If for every $t\in T$ of finite order, we have
$\Theta_a(m^t)\equiv 0$, then $m=0$.
\end{proposition}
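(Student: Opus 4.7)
The plan is to follow the inductive strategy used in the proof of Theorem \ref{theo:unique}, adapted to the Weyl-antisymmetric sum. I would write $m=\sum_P q_P[C_P]$ with each $P\subset\t^*_{\geq 0}$, arranged by inclusion--exclusion so that distinct top-dimensional $P$'s meet only in strictly lower dimensions, and induct on $\ell=\max_P\dim P$. The goal is to show each top-dimensional $q_P$ vanishes; the residue then lies in the analogous subspace with strictly smaller maximal dimension, and the conclusion follows by induction.

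Fix a top-dimensional $P$ and a point $\xi$ in its relative interior. Because $P\subset\t^*_{\geq 0}$, the polyhedron $P$ is contained in the smallest face $F_\xi$ of $\t^*_{\geq 0}$ whose relative interior contains $\xi$, and $F_\xi$ is the pointwise fixed locus of the stabilizer $W_\xi\leq W$. I choose a test function $\varphi$ supported in a small ball about $\xi$, disjoint from every other polyhedron of dimension $\geq\ell$. For $k$ large, terms $w(\lambda+\rho)/k$ with $w\notin W_\xi$ leave the support of $\varphi$ (since $w\xi\neq\xi$), and for $w\in W_\xi$ and $\lambda\in F_\xi$ one has $w\lambda=\lambda$, so $w(\lambda+\rho)/k=\lambda/k+w\rho/k$. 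A Taylor expansion combined with the Weyl denominator identity for $W_\xi$ then gives
\[
\sum_{w\in W_\xi}\epsilon(w)\,\varphi\!\bigl(\tfrac{\lambda}{k}+\tfrac{w\rho}{k}\bigr)=\tfrac{1}{k^{j_\xi}}\,D_\xi\varphi\!\bigl(\tfrac{\lambda}{k}\bigr)+O\!\bigl(k^{-(j_\xi+1)}\bigr),
\]
where $j_\xi$ is the number of positive roots of $W_\xi$ and $D_\xi$ is a nonzero constant multiple of $\prod_{\alpha\in\Delta^+_{W_\xi}}(\alpha\cdot\nabla)$, a differential operator acting transversally to $F_\xi$.

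Substituting yields
\[
\ll\Theta_a(m^t)(k),\varphi\rr=\tfrac{1}{|W|\,k^{j_\xi}}\,\ll\Theta(m^t)(k),D_\xi\varphi\rr+\text{lower orders in }1/k,
\]
and Proposition \ref{pro:tech2} supplies the asymptotic expansion of the right-hand side. By the chosen support, only $P$ contributes at the top rung; specializing $\varphi(\xi'_\parallel+\xi'_\perp)=\chi(\xi'_\parallel)\phi(\xi'_\perp)$ with $\chi$ a bump in $F_\xi$ and $\phi$ a normal-direction profile satisfying $D_\xi\phi(0)\neq 0$ reduces the leading coefficient to a nonzero multiple of $\int_P r_{pol}^{q_P^t}(k\xi',k)\chi(\xi')\,d\xi'$, where $r_{pol}^{q_P^t}$ denotes the polynomial part of the restriction of $q_P^t$ to $E_P\cap(\Lambda\oplus\Z)$. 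The hypothesis $\Theta_a(m^t)\equiv 0$ forces this integral to vanish identically in $k$ at the appropriate power. Separating periodic-in-$k$ components by linear independence of $\zeta^k$ and then varying $\chi$ over bumps on $P^\circ$ yields $r_{pol}^{q_P^t}\equiv 0$ on $R_P$ for every finite-order $t\in T$. The character-decomposition argument at the start of Theorem \ref{theo:unique}'s proof then forces $q_P\equiv 0$ on $E_P\cap(\Lambda\oplus\Z)$, closing the inductive step.

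The main obstacle is that, unlike in Theorem \ref{theo:unique}, the test functions cannot always be chosen inside the open chamber $\t^*_{>0}$: when $P$ lies on a wall, the $\rho/k$ shift in the antisymmetric sum is essential, and one must work with the transverse operator $D_\xi$ of order $j_\xi$. The delicate point is to verify that even after losing $j_\xi$ powers of $k$ to this Weyl antisymmetrization, the resulting leading asymptotic still faithfully records the full top quasi-polynomial coefficient of $q_P$ via $r_{pol}^{q_P^t}$, so that no information is lost; the remaining bookkeeping parallels the proof of Theorem \ref{theo:unique}.
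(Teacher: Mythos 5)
Your strategy is essentially sound and lands in the same place as the paper, but the two proofs differ technically in a way worth noting. The paper stratifies $m$ by \emph{faces} $\sigma$ of the closed Weyl chamber (defining $\CS_{\ell,\geq 0}$ as sums of pieces supported on faces of dimension $\leq\ell$), and for each face $\sigma$ it builds a test function $\varphi=\varphi_0(\xi_0)\varphi_1(\xi_1)$ whose normal factor is deliberately chosen as $\varphi_0=\chi_0\prod_{\alpha\in\Delta_0^+}\ll\cdot,H_\alpha\rr$ with $\chi_0$ a bump equal to $1$ near $0$. Because $\chi_0\equiv 1$ near $0$ and $w\rho_0/k\to 0$, the Weyl sum $\sum_{w\in W_0}\epsilon(w)\varphi_0(w\rho_0/k)$ collapses \emph{exactly} (not asymptotically) to $c_0 k^{-N_0}$ for $k$ large, reducing the antisymmetric pairing to an exact formula on the face $\sigma$ with a $\rho_1/k$ shift, which is then handled by the $e^{\partial_{\rho_1}/k}$ argument from Theorem~\ref{theo:unique}. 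Your route replaces this exact collapse with a Taylor expansion and a Weyl-denominator leading-order analysis, extracting $\frac{1}{k^{j_\xi}}D_\xi\varphi$ as the principal term. That is workable — and your observation that $P\subset\overline{F_\xi}$ for $\xi$ in the relative interior of $P$ (forced by $P\subset\t^*_{\geq 0}$ and $w\t^*_{\geq 0}\cap\t^*_{\geq 0}$ being the fixed-point face) is exactly the mechanism the paper encodes by stratifying along faces — but the exact choice of $\varphi_0$ avoids the bookkeeping of higher Taylor remainders entirely, which is what makes the paper's version cleaner than the asymptotic one you propose.

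Two small points you should repair. First, to replicate the paper's step ``$\ll\Theta(m^g)(k),\varphi\rr=\ll\Theta(q_P^g[C_P])(k),\varphi\rr$'' exactly, the support of $\varphi$ must be chosen disjoint from \emph{all} other polyhedra in the decomposition, not merely those of dimension $\geq\ell$: otherwise a lower-dimensional $P'$ carrying a quasi-polynomial of larger total degree $d_0'$ could contribute at the same power of $k$ as the term you are isolating. Since the union of the other $P'$ is closed and nowhere dense in $P^\circ$, such a choice is always possible. Second, the equality you extract only pins down the \emph{top-degree} homogeneous piece of $r_{pol}^{q_P^t}$; you must, as in Theorem~\ref{theo:unique}, run the argument by contradiction on the top total degree of $q_P$ and sweep over all $t$, rather than asserting $r_{pol}^{q_P^t}\equiv 0$ in one step. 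You gesture at this via ``the character-decomposition argument,'' but it is worth making explicit, since the $k^{-j_\xi}$ Weyl loss mixes with the $k^{\dim P + d_0}$ grading of Proposition~\ref{pro:tech2} and one must be sure the contradiction is driven by the genuinely leading power.
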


\begin{proof}
Consider $\varphi$ a test function supported in the interior of the Weyl chamber.
Thus, for $\lambda\geq 0$,  $\varphi(w(\lambda+\rho)/k)$
is not zero only if $w=1$.
So
$$\ll\Theta_a(m)(k),\varphi\rr=\frac{1}{|W|}\sum_{\lambda\geq 0} m(\lambda,k)  \varphi((\lambda+\rho)/k)$$
while $$\ll\Theta(k),\varphi\rr=\sum_{\lambda\geq 0} m(\lambda,k)  \varphi(\lambda/k).$$

Let $(\partial_\rho \varphi)(\xi)=\frac{d}{d\epsilon}`\varphi(\xi+\epsilon \rho)|_{\epsilon=0}$ and consider the series of differential operators
with constant coefficients $e^{\partial_\rho/k}=1+\frac{1}{k}\partial_\rho+\cdots$.
We then see that,
if $\ll A(\xi,k),\varphi \rr$ is the asymptotic expansion
of $\ll\Theta(k),\varphi\rr$,
the asymptotic expansion
of $\ll\Theta_a(k),\varphi\rr$
is $\ll  A(\xi,k), e^{\partial\rho/k}\varphi\rr.$
Proceeding as in the proof of Theorem \ref{theo:unique},
  we see that if $\ll\Theta_a(m^t)(k),\varphi\rr\equiv 0$ for all $t\in T$
   of finite order, then $m(\lambda,k)$ is identically $0$ when $\lambda$ is  on the interior of the Weyl chamber.

Consider all  faces (closed) $\sigma$ of the closed Weyl chamber.
Define $\CS_{\ell ,\geq 0}\subset \CS(\Lambda)$ to be the space of
$m=\sum_{\sigma, \dim(\sigma)\leq \ell} m_\sigma$,
where $m_\sigma\in \CS_{\geq 0}(\Lambda)$
is such that
$m_\sigma(\lambda,k)=0$ if $\lambda$ is not in $\sigma$.
Let us prove by induction on $\ell$ that if $m\in \CS_{\ell,\geq 0}$ and $\Theta_a^t(m^t)\equiv 0$, for all $t\in T$ of finite order, then
$m=0$.

If $\ell=0$, then $m(\lambda,k)=0$ except if $\lambda=0$, and our distribution is
$$m(0,k)\sum_{w} \epsilon(w)\varphi(w\rho/k).$$
Now, take for example $\varphi(\xi)=\prod_{\alpha>0}(\xi,H_\alpha) \chi(\xi)$ where
$\chi$ is invariant with small compact support and identically equal to $1$ near $0$.
Then $\ll\Theta_a(m),\varphi\rr$
for $k$ large  is  equal to $c\frac{1}{k^N}m(0,k)$,
where $N$ is the number of positive roots, and $c$ a non zero constant.
So we conclude that $m(0,k)=0$.

Now consider
$m=\sum_{\dim \sigma=\ell} m_\sigma+\sum_{\dim f<\ell}m_f$.
Choose $m_\sigma$ in the first sum.
Let $\sigma^0$ be the relative interior of $\sigma$.
Let $\Delta_0$  be the set of roots $\alpha$, such that $\ll H_\alpha,\sigma\rr=0$.
Then $\t^*=\t_1^*\oplus \t_0^*$, where
$\t_0^*=\sum_{\alpha\in \Delta^0}\R \alpha$
and $\t_1^*=\R\sigma$.
We write $\xi=\xi_0+\xi_1$ for $\xi\in \t^*$, with $\xi_0\in \t_0^*,\xi_1\in \t_1^*$.
Then $\rho=\rho_0+ \rho_1$
with $\rho_1\in\t_1^*$ and $\rho_0=\frac{1}{2}\sum_{\alpha\in \Delta_0^+}\alpha$.
Let $W_0$ be the subgroup of the Weyl group generated by the reflections
$s_\alpha$ with $\alpha\in\Delta_0$.  It leaves stable $\sigma$.

Consider $\varphi$ a test function of the form
$\varphi_0(\xi_0) \varphi_1(\xi_1)$ with $\varphi_0(\xi_0)=\chi_0(\xi_0)\prod_{\alpha\in \Delta_0^+}\ll\xi_0, H_\alpha\rr$ with $\chi_0(\xi_0)$ a function on $\t_0^*$ with small support near $0$ and identically $1$ near $0$,
while $\varphi_1(\xi_1)$ is supported on a compact subset contained in $\sigma^0$.

For $k$ large,
$$\ll\Theta_a^t,\varphi\rr=\frac{1}{|W|}m_{\sigma}(\lambda,k) \sum_{w\in W_0}
\phi(w(\lambda+\rho)/k).$$
So
$$\ll\Theta_a^t,\varphi\rr=c_0\frac{1}{k^{N_0}}
\sum_{\lambda\in \sigma} m_\sigma(\lambda,k)
\varphi_1((\lambda+\rho_1)/k).$$
As in the preceding case, this implies that $m_\sigma(\lambda,k)=0$ for
$\lambda\in \sigma^0$.
Doing it successively for all $\sigma$ entering in the first sum,
we conclude that $m\in \CS_{\geq 0,\ell-1}(\Lambda)$.
By induction, we conclude that $m=0$.
\end{proof}

\begin{bibdiv}
\begin{biblist}

\bib{ber-ver-passemuraille}{article}{
   author={Berline, Nicole},
   author={Vergne, Mich\`ele},
   title={Analytic continuation of a parametric polytope and wall-crossing},
   conference={
      title={Configuration spaces},
   },
   book={
      series={CRM Series},
      volume={14},
      publisher={Ed. Norm., Pisa},
   },
   date={2012},
   pages={111--172},
   review={},
   doi={},
}

\bib{ber-ver-elm}
{article}{
author={Berline, Nicole},
   author={Vergne, Mich\`ele},
   title={Local asymptotic Euler-Maclaurin expansion for Riemann sums over a semi-rational polyhedron},
   journal={arXiv},
   volume={},
   date={2015},
   number={1502.01671},
   pages={},
   issn={},
   review={}}

\bib{cohen}{book}{
   author={Cohen, Henri},
   title={Number theory. Vol. II. Analytic and modern tools},
   series={Graduate Texts in Mathematics},
   volume={240},
   publisher={Springer, New York},
   date={2007},
   pages={xxiv+596},
   %isbn={978-0-387-49893-5},
   %review={\MR{2312338}},
}

\bib{guisteriemann}{article}{
   author={Guillemin, Victor},
   author={Sternberg, Shlomo},
   title={Riemann sums over polytopes},
   language={English, with English and French summaries},
   note={Festival Yves Colin de Verdi\`ere},
   journal={Ann. Inst. Fourier (Grenoble)},
   volume={57},
   date={2007},
   number={7},
   pages={2183--2195},
   %issn={0373-0956},
   %review={\MR{2394539}},
}

\bib{parformal1}{article}{
   author={Paradan, Paul-{\'E}mile},
   title={Formal geometric quantization},
   language={English, with English and French summaries},
   journal={Ann. Inst. Fourier (Grenoble)},
   volume={59},
   date={2009},
   number={1},
   pages={199--238},
   %issn={0373-0956},
   %review={\MR{2514864}},
}

\bib{parformal2}{article}{
   author={Paradan, Paul-{\'E}mile},
   title={Formal Geometric Quantization III, Functoriality in the spin-c setting },
   language={},
   journal={arxiv},
   volume={},
   date={2017},
   number={1704.06034},
   %pages={199--238},
   %issn={0373-0956},
   %review={\MR{2514864}},
}

\bib{pep-vergneasy}
{article}{
author={Paradan, Paul-Emile},
   author={Vergne, Mich\`ele},
   title={The equivariant index of  twisted Dirac operators and semi-classical limits},
   journal={to appear},
   volume={},
   date={2017},
   number={},
   pages={},
   issn={},
   review={}}

\bib{sze-ver}{article}{
   author={Szenes, Andr\'as},
   author={Vergne, Mich\`ele},
   title={Residue formulae for vector partitions and Euler-MacLaurin sums},
   note={Formal power series and algebraic combinatorics (Scottsdale, AZ,
   2001)},
   journal={Adv. in Appl. Math.},
   volume={30},
   date={2003},
   number={1-2},
   pages={295--342},
   %issn={0196-8858},
   %review={},
   %doi={10.1016/S0196-8858(02)00538-9},
}

\bib{ver:graded}{article}{
%author={Paradan, Paul-Emile},
   author={Vergne, Mich\`ele},
  title={The equivariant Riemann-Roch theorem and the graded Todd class},
  journal={arXiv},
   volume={},
   date={2016},
   number={1612.04651},
   pages={},
   issn={},
   review={}}

\end{biblist}
\end{bibdiv}
\end{document}